\documentclass{amsart}
\usepackage{mathabx}

\usepackage{}
\setlength{\textwidth}{12cm}
\setlength{\textheight}{18.5cm}
\copyrightinfo{2009}{American Mathematical Society}

\newtheorem{theorem}{Theorem}[section]
\newtheorem{lemma}[theorem]{Lemma}
\newtheorem{proposition}[theorem]{Proposition}
\newtheorem{corollary}[theorem]{Corollary}

\usepackage{tikz}
\usepackage{calc}
\usetikzlibrary{decorations.markings}
\usetikzlibrary{arrows}
\usetikzlibrary{arrows.meta}
\tikzstyle{vertex}=[circle, draw, inner sep=0pt, minimum size=1pt]
\newcommand{\vertex}{\node[vertex]}

\theoremstyle{definition}

\theoremstyle{remark}

\numberwithin{equation}{section}

\begin{document}

\title{Divisor graph of complement of $\Gamma(R)$}

\author{Ravindra Kumar}
\address{Department of Mathematics,IIT Patna, Bihta campus, Bihta-801 106}
\curraddr{}
\email{ravindra.pma15@iitp.ac.in}
\thanks{}

\author{Om Prakash}
\address{Department of Mathematics \\
IIT Patna, Bihta campus, Bihta-801 106}
\curraddr{}
\email{om@iitp.ac.in}
\thanks{}

\subjclass[2010]{05C20, 05C25, 05C78.}

\keywords{Zero divisor graph, Divisor graph, Principal ideal ring, Reduced ring, Associated prime ideal.}

\date{}

\maketitle
\begin{abstract}

 Let $\overline{\Gamma(R)}$ be the complement of zero divisor graph of a finite commutative ring $R$. In this article, we have provided the answer of the question (ii) raised by Osba and Alkam in their paper and prove that $\overline{\Gamma(R)}$ is a divisor graph if $R$ is a local ring. It is shown that when $R$ is a product of two local rings, then $\overline{\Gamma(R)}$ is a divisor graph if one of them is an integral domain. Also, we prove that if $\lvert Ass(R)\rvert = 2$, then $\overline{\Gamma(R)}$ is a divisor graph.
\end{abstract}

\section{INTRODUCTION}

Throughout the manuscript, $R$ represents a finite commutative principal ideal ring with unity. Let $Z(R)$ be the set of zero divisors of $R$ and $Reg(R) = R\backslash Z(R)$, be the set of regular elements of $R$. An element $a \in R$ is called a regular element if there exists $b \in R$ such that $a = a^{2}b$, the element $b$ is called a von Neumann inverse for $a$. In 1980, zero divisor graph was introduced by I. Beck\cite{beck} in context of coloring and redefined by Anderson and Livingston\cite{ander} in 1999. In $R$, a zero divisor graph is denoted by $\Gamma(R)$, and defined by a graph whose vertex set is the set of all non-zero zero divisors of $R$ and any two elements $a, b$ are adjacent if and only if their product is zero i.e $a.b = 0$. Complement graph $\overline{\Gamma(R)}$ is defined on the same vertex set of $\Gamma(R)$ such that two distinct vertices $a$ and $b$ are adjacent if and only if $a.b \neq 0$. Anderson and Livingston also proved that $\Gamma(R)$ is connected with at most diameter $3$.\\

Divisor graph were introduced by Chartrand et al.\cite{div} in 2001. Let $S$ be a nonempty set of positive integers, the divisor graph $G(S)$ of $S$ has vertex set $S$ and two vertices $i$ and $j$ of $G(S)$ are adjacent if $i$ divides $j$ or $j$ divides $i$. A graph $G$ is a divisor graph if there exists a set $S$ of positive integers such that $G$ is isomorphic to $G(S)$. In a directed graph $G$, a vertex is said to be a receiver if its out-degree is zero and its in-degree is positive. A transmitter is a vertex having positive out-degree and zero in-degree. A vertex $t$ with positive in-degree and positive out-degree is transitive if whenever $u \rightarrow t$ and $t \rightarrow v$ are edges in $G$, then $u \rightarrow v$ is an edge in $G$. It has shown in \cite{div} that if $G$ be a graph, then $G$ is a divisor graph if and only if there exists an orientation $D$ of $G$ such that every vertex of $D$ is a transmitter, a receiver, or a transitive vertex. Further, divisor graph are also studied in\cite{al}\\

In this article, we will study the complement graph $\overline{\Gamma(R)}$. We have investigated for what diameter of the ring $R = R_1 \times R_2$ is a divisor graph, where $R_1$ and $R_2$ are commutative rings with unity. Moreover, we observe the condition in which $\overline{\Gamma(R[x]))}$ to be a divisor graph.

The distance between two vertices of a graph $G$ is the number of edges in a minimal path between the vertices. The diameter of $G$ (denoted diam$(G)$) is the maximal distance between any pair of vertices. A cycle is a closed path, excluding loops, from a vertex to itself that does not repeat edges. For basic definitions and results of graph theory, we prefer \cite{harary}.\\

A ring $R$ is said to be a local if it has unique maximal ideal. Let $M$ be an $R$-module, and $P$ a prime ideal of $R$. Then $P$ is an associated prime of $M$ (or that $P$ is associated to $M$) if $P$ is the annihilator of some non-zero $ x \in M$. The set of associated primes of $M$ is denoted by $Ass(M)$.

\section{Main Result}

It is known from Theorem 8.7 of \cite{ati} that if $R$ is a non-local ring, then $R$ is  a direct product of local rings. Therefore, we start our results with local rings and finally present some  results for the direct product of local rings.

\begin{theorem}
	If $R$ is a local ring, then $\overline{\Gamma(R)}$ is a divisor graph.\\
	
\end{theorem}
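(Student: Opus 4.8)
The plan is to apply the orientation criterion of Chartrand et al.\ recalled above: it suffices to exhibit an orientation $D$ of $\overline{\Gamma(R)}$ in which every vertex is a transmitter, a receiver, or a transitive vertex.

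First I would use the hypotheses to get a clean combinatorial model of the graph. Since $R$ is a finite commutative local principal ideal ring, it is a finite chain ring: its maximal ideal has the form $\mathfrak m = (m)$, the proper ideals of $R$ are exactly $(m), (m^{2}), \dots, (m^{n}) = (0)$ where $n$ is the nilpotency index of $m$, they are linearly ordered by inclusion, and $Z(R) = \mathfrak m$. Hence every nonzero zero divisor $a$ has a well-defined valuation $v(a) \in \{1, \dots, n-1\}$ determined by $(a) = (m^{v(a)})$, i.e.\ $a = u\,m^{v(a)}$ for some unit $u$. The one computation needed is that for $a, b \in Z(R)^{*}$ we have $ab = 0$ if and only if $v(a) + v(b) \ge n$; equivalently, $a$ and $b$ are adjacent in $\overline{\Gamma(R)}$ precisely when $v(a) + v(b) \le n - 1$.

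Next I would fix any total order $\preceq$ on the vertex set $Z(R)^{*}$ refining the valuation (so $v(a) < v(b)$ implies $a \prec b$, while within a single valuation level the order may be chosen arbitrarily), and orient every edge of $\overline{\Gamma(R)}$ from its $\preceq$-smaller endpoint to its $\preceq$-larger endpoint. To check the criterion, let $t$ be a vertex with an incoming edge $u \to t$ and an outgoing edge $t \to w$; then $u \prec t \prec w$ and adjacency gives $v(u) + v(t) \le n - 1$ and $v(t) + v(w) \le n - 1$. Since $u \prec t$ forces $v(u) \le v(t)$, we obtain $v(u) + v(w) \le v(t) + v(w) \le n - 1$, so $u$ and $w$ are adjacent, and as $u \prec w$ the edge between them is oriented $u \to w$. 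Therefore every vertex with both positive in-degree and positive out-degree is transitive, while a vertex with zero in-degree is a transmitter (or isolated) and one with zero out-degree is a receiver (or isolated). The Chartrand et al.\ characterization then gives that $\overline{\Gamma(R)}$ is a divisor graph.

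I expect essentially all the substance to be the two structural facts — that a finite local principal ideal ring is a chain ring, so that the valuation $v$ is available, and that $ab = 0 \iff v(a) + v(b) \ge n$; once these are set up, the choice of orientation and the transitivity verification are immediate. The only point requiring a word of care is that $\overline{\Gamma(R)}$ genuinely has isolated vertices (for instance socle elements, and indeed the whole complement is edgeless when $n = 2$), but isolated vertices place no constraint in the criterion, so this does not affect the conclusion.
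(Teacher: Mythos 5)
Your proposal is correct and follows essentially the same route as the paper: write each vertex as a unit times a power of the generator of the maximal ideal, note that adjacency in $\overline{\Gamma(R)}$ means the exponents sum to less than the nilpotency index, and orient edges along a total order refining the exponent (the paper breaks ties by indexing the units). Your explicit verification that any vertex with both an in-edge and an out-edge is transitive, and your remark that isolated vertices (e.g.\ those of valuation $n-1$) are harmless, actually supply the details the paper leaves at ``it can be easily checked.''
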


\begin{proof}
	
	Let $M$ be the maximal ideal of the given local ring $R$. Since $M$ is principal so there exists $a\in R$ such that $M = <a>$ i.e $M = aR$. Since $R$ is finite, there exists $n \in \mathbb{N}$ such that $ ~ a^n =0$ and $a^{n-1} \neq 0$. Now, $V(\overline{\Gamma(R)}) = ua^i$ where $u$ is a unit in $R$ and $i = \{1,...,n-1\}$. Also, two vertices $ua^i$ and $va^j$ are adjacent if and only if $i+j < n$. Let $U = \{u_1, u_2,..., u_m\}$ be the set of units. Also, here $M$ can be written as $\{u_ia^j~\lvert ~i\leq m,~ j< n\}$ (with distinct elements). We consider an orientation for $\overline{\Gamma(R)}$ for any adjacent vertices $u_{i_1}a^j$ and $u_{i_2}a^k$ as follows:\\
	If $j <k$, then the orientation is $u_{i_1}a^j \longrightarrow u_{i_2}a^k$, if $ j= k$ and $i_1< i_2$, then the orientation is $u_{i_1}a^j \longrightarrow u_{i_2}a^k$. It can be easily checked that the under above orientation for $\overline{\Gamma(R)}$, every vertex is either transitive or receiver or transmitter except the isolated vertex of the form $u_i a^{n-1}$. Here, $u_i a^{j}$ is a receiver when $j > \lfloor \frac{n}{2} \rfloor , ~ j \neq n-1$. If  $j < \lceil \frac{n}{2} \rceil , ~ j \neq 1$, then $u_i a^{j}$ is a transitive except $u_m a^{\frac{n-1}{2}}$ (this is a receiver). Also, $u_1 a^1$ is a transmitter. \\
	If $n$ is even, then apart from above transitive/ receiver/transmitter, all vertices of $u_i a^{\frac{n}{2}}$ form are receiver.
\end{proof}

\begin{lemma}
	
	A graph contains the below given induced subgraph $fig(1)$ is not a divisor graph.
	
	\[\begin{tikzpicture}

	\vertex (D) at (4.5,-1) [label=below:${D}$]{};
	\vertex (C) at (6.3,1.5) [label=right:${C}$]{};
	\vertex (B) at (5.4,4) [label=right:${B}$]{};
	\vertex (A) at (3,5) [label=above:${A}$]{};
	\vertex (G) at (0.1,3.5) [label=above:${G}$]{};
	\vertex (F) at (-0.7,1) [label=left:${F}$]{};
	\vertex (E) at (0.8,-1) [label=below:${E}$]{};
	\path
	(A) edge (C)
	(A) edge (D)
	(A) edge (E)
	(A) edge (F)
	(A) edge (G)
	(B) edge (D)
	(B) edge (E)
	(C) edge (E)
	(C) edge (F)
	(C) edge (G)
	(D) edge (E)
	(D) edge (G)
	(E) edge (G)
	;
	
	\end{tikzpicture}\]
	\hspace{5cm} \textbf{Figure 1} \\

\end{lemma}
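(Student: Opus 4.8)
The plan is to use the orientation characterization of divisor graphs quoted from \cite{div}: a graph $G$ is a divisor graph if and only if it admits an orientation $D$ in which every vertex is a transmitter, a receiver, or a transitive vertex. So I would argue by contradiction: suppose the graph of Figure 1 (or any graph containing it as an induced subgraph) is a divisor graph, fix such an orientation $D$, and derive a contradiction purely from the adjacency pattern of the seven labelled vertices. The key structural observations I would record first are the degrees in $fig(1)$: the vertex $A$ is adjacent to $C,D,E,F,G$ (degree $5$), the vertex $E$ is adjacent to $A,B,C,D,G$ (degree $5$), while $B$ has degree $2$ (adjacent only to $D,E$) and $F$ has degree $2$ (adjacent only to $A,C$). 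The edge set also shows $A$ and $E$ are adjacent to each other, and the triangle $A,E,G$ and triangle $C,E,G$ and the path $B$--$D$--$A$, $B$--$E$--$C$ etc. are the relevant local configurations.

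The heart of the argument is the following lemma-style fact that I would isolate: in any valid orientation, whenever a vertex $t$ is transitive and $t$ has an in-neighbour $u$ and an out-neighbour $v$, then $u,v$ must be adjacent; and a vertex of degree $\ge 2$ that has both an in-neighbour and an out-neighbour must be transitive (since it is neither a pure transmitter nor a pure receiver). Equivalently, a vertex all of whose incident edges are oriented the same way is a transmitter or a receiver, and otherwise it is forced to be transitive and hence its in-neighbourhood must be completely joined to its out-neighbourhood. I would then do a short case analysis on the orientation of the edge $AE$. Consider the vertex $A$: its non-$E$ neighbours are $C,D,F,G$; the vertex $E$: its non-$A$ neighbours are $B,C,D,G$. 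Note $F$ is not adjacent to $D,E,G$ and $B$ is not adjacent to $A,C,G$. If $A$ is transitive, then the edge $AF$ forces $F$ to lie on one side of $A$ and then every vertex on the other side of $A$ must be adjacent to $F$; but $F$ is adjacent only to $C$ (besides $A$), so at most one of $C,D,G$ can be on the far side of $A$ from $F$, which (together with $E$) over-constrains the orientation around $A$. Symmetrically for $E$ using $B$, which is adjacent only to $D$ besides $E$. If instead $A$ is a transmitter or receiver, all of $A$'s edges point the same way; combined with the same dichotomy at $E$ and at $G$ (which has neighbours $A,C,D,E$), I would chase the implied orientations of the edges $AG, EG, CG, DG, CE, DE, AC, AD$ and reach a directed contradiction — typically an edge forced both ways, or a transitive vertex whose in- and out-neighbours fail to be adjacent (e.g. $G$ transitive would require $A$ adjacent to $C$ and to $D$, fine, but would also interact with $E$; the non-edges $BF$, $BG$, $BC$, $BA$, $FD$, $FE$, $FG$ are the obstructions to exploit).

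Concretely I expect the clean contradiction to come from the pair of degree-two vertices $B$ and $F$ together with the two high-degree vertices $A$ and $E$: $B$'s only neighbours are $D,E$ and $F$'s only neighbours are $A,C$, and since $B$ is not adjacent to $F$, $A$, $C$, $G$ while $F$ is not adjacent to $B$, $D$, $E$, $G$, the transitivity requirement at $A$, at $E$, and at $G$ cannot be met simultaneously no matter how $AE$, $AG$, $EG$, $CG$, $DG$ are oriented. So the write-up would be: (1) list adjacencies/non-adjacencies of $fig(1)$; (2) state the two elementary facts about transmitters/receivers/transitive vertices; (3) assume an orientation exists and split on whether $A$ (and then $E$, $G$) is transitive; (4) in each case deduce a forbidden edge orientation or a missing adjacency among in/out neighbours, contradiction; (5) conclude no such orientation exists, so by the \cite{div} criterion the graph is not a divisor graph, and the property is inherited by any graph containing $fig(1)$ as an induced subgraph since an induced subgraph of a divisor graph is a divisor graph. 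The main obstacle is bookkeeping: making the case split on edge orientations genuinely exhaustive yet short, and I would manage that by always first fixing the orientation of $AE$ and then using the forced transitivity of whichever of $A,E$ has a "mixed" neighbourhood to immediately kill most branches via the degree-two vertices $B$ and $F$.
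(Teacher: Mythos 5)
Your strategy is the same as the paper's: both invoke the orientation criterion of \cite{div} and derive a contradiction from the adjacency pattern of the seven vertices, using the non-edges at the degree-two vertices $B$ and $F$ (together with the non-edge $CD$) as the obstructions, and you correctly add the remark -- which the paper omits -- that an induced subgraph of a divisor graph is a divisor graph, so it suffices to treat $fig(1)$ itself. The one genuine shortfall is that your write-up stops at the level of a plan: phrases like ``I would chase the implied orientations'' and ``I expect the clean contradiction to come from'' defer exactly the case analysis that constitutes the proof, and until it is executed nothing has been proved. The good news is that your own key observation (non-adjacent neighbours of a transitive vertex must lie on the same side of it) closes the argument quickly, and in fact more rigorously than the paper's version, which simply posits the orientation of Figure 2 without justifying that it is forced. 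Concretely: the non-edges $CD$, $DF$, $EF$, $FG$ among the neighbours of $A$ force all five neighbours of $A$ onto one side, so $A$ cannot be transitive and must be a pure transmitter or receiver; the non-edges $AB$, $BC$, $BG$, $CD$ do the same for $E$. Since $A$ and $E$ are adjacent, after reversing all arrows if necessary we may take $A$ a transmitter and $E$ a receiver, so $A\to D\to E$ makes $D$ transitive; then $D\to B$ would need $A\to B$ (non-edge), so $B\to D$, and $D\to G$ would need $B\to G$ (non-edge), so $G\to D$; the non-edge $CD$ then forces $G\to C$; finally $F\to C$ would need $F\to E$ (non-edge), so $C\to F$, and $G\to C\to F$ demands $G\to F$, a non-edge -- contradiction. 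With that chase written out, your proof is complete and is, if anything, tighter than the one in the paper.
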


\begin{proof}
	Let $D$ be an orientation of above graph $fig(1)$ in which every vertex is either receiver or a transmitter or transitive. We assume that $A \longrightarrow G$ in $D$, then $A \longrightarrow F$ in $D$, otherwise orientation does not make sense. Now, we consider the orientation given in $fig(2)$. To complete this orientation we must have either $G \longrightarrow D ~ or ~ D \longrightarrow G$. If $G \longrightarrow D$, then $G$ is neither receiver nor transmitter nor transitive. Again, if $D \longrightarrow G$, then in this case $D$ is neither receiver nor transmitter nor transitive. So, the graph of $fig(1)$ is not a divisor graph.
\end{proof}

\[\begin{tikzpicture}

\tikzstyle{vertex}=[circle, draw, inner sep=0pt, minimum size=6pt]
\tikzset{edge/.style = {->,> = latex'}}

\vertex (D) at (4.5,-1) [label=below:${D}$]{};
\vertex (C) at (6.3,1.5) [label=right:${C}$]{};
\vertex (B) at (5.4,4) [label=right:${B}$]{};
\vertex (A) at (3,5) [label=above:${A}$]{};
\vertex (G) at (0.1,3.5) [label=above:${G}$]{};
\vertex (F) at (-0.7,1) [label=left:${F}$]{};
\vertex (E) at (0.8,-1) [label=below:${E}$]{};

\draw[edge] (A) to (C);
\draw[edge] (A) to (D);
\draw[edge] (A) to (E);
\draw[edge] (A) to (F);
\draw[edge] (A) to (G);
\draw[edge] (B) to (D);
\draw[edge] (B) to (E);
\draw[edge] (C) to (E);
\draw[edge] (C) to (F);
\draw[edge] (C) to (G);
\draw[edge] (D) to (E);
\draw[edge] (E) to (G);
;

\end{tikzpicture}\]
\hspace{5 cm} \textbf{Figure 2} \\

\begin{theorem}
	Let $R_{1}$ and $R_{2}$ be two commutative rings and $R = R_1 \times R_2$ where $diam(\Gamma(R_1)) = diam(\Gamma(R_2)) = 0$. Then $\overline{\Gamma(R)}$ is a divisor graph if and only if at least one of $R_1 $ and $R_2$ is an integral domain.
	
\end{theorem}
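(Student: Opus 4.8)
The plan is to reduce the statement to an explicit finite list of cases, exploiting how restrictive the hypothesis $\mathrm{diam}(\Gamma(R_i))=0$ is. That hypothesis means $\Gamma(R_i)$ has at most one vertex: if it has none, then $R_i$ is a finite integral domain, hence a field; if it has exactly one vertex, say the non-zero zero divisor $e_i$, then $Z(R_i)=\{0,e_i\}=M_i$ is a two-element ideal of the local ring $R_i$ with $M_i^2=(e_i^2)=0$, so $R_i/M_i\cong\mathbb F_2$, $|R_i|=4$, and $R_i=\{0,e_i,1,1+e_i\}$ with $e_i^2=0$ and $\{1,1+e_i\}$ the units (i.e.\ $R_i\cong\mathbb Z_4$ or $R_i\cong\mathbb F_2[x]/(x^2)$). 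Thus, up to interchanging $R_1$ and $R_2$, there are three cases: (a) both $R_i$ fields; (b) $R_1$ a field and $R_2$ with a unique non-zero zero divisor; (c) both $R_i$ with a unique non-zero zero divisor. Since ``at least one of $R_1,R_2$ is a domain'' means precisely ``case (a) or case (b)'', it suffices to show $\overline{\Gamma(R)}$ is a divisor graph in cases (a),(b) and is not one in case (c).

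For $(\Leftarrow)$: in case (a), with $R_i=\mathbb F_{q_i}$, the vertices of $\overline{\Gamma(R)}$ are the $(a,0)$ with $0\ne a\in\mathbb F_{q_1}$ and the $(0,b)$ with $0\ne b\in\mathbb F_{q_2}$, and since each $R_i$ is a domain one sees at once that $\overline{\Gamma(R)}$ is the disjoint union of the two complete graphs on these sets; orienting each as a transitive tournament makes every vertex a transmitter, receiver or transitive, so $\overline{\Gamma(R)}$ is a divisor graph. In case (b) I would first record the adjacencies: set $p_1=(0,e_2)$, $p_2=(0,1)$, $p_3=(0,1+e_2)$ and $x_a=(a,0)$, $y_a=(a,e_2)$ for $a\in\mathbb F_{q_1}^{\times}$; then $\{p_1,p_2,p_3\}$ is a triangle, $p_2$ and $p_3$ are joined to every $y_a$ and to nothing else outside the triangle, $p_1$ is joined to nothing outside the triangle, $\{x_a\}\cup\{y_a\}$ is a clique, and no $x_a$ is joined to any $p_i$. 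Then I would take the orientation: $p_2$ a transmitter; $p_3\to p_1$ and $p_3\to y_a$ for all $a$; every edge between $\{x_a\}$ and $\{y_a\}$ oriented as $x_a\to y_b$; and transitive tournaments on $\{x_a\}$ and on $\{y_a\}$. A direct check then shows $p_1$ is a receiver, $p_2$ a transmitter, and $p_3$, each $x_a$, each $y_a$ is a transmitter, receiver or transitive; hence $\overline{\Gamma(R)}$ is a divisor graph.

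For $(\Rightarrow)$ I would prove the contrapositive: if neither $R_i$ is a domain, we are in case (c), where $\lvert V(\overline{\Gamma(R)})\rvert=11$, and I claim that the seven vertices
\[(0,e_2),\quad (0,1),\quad (e_1,0),\quad (e_1,e_2),\quad (e_1,1),\quad (1,0),\quad (1,e_2)\]
induce the graph of Figure 1, under the correspondence $(0,e_2)\mapsto B$, $(0,1)\mapsto D$, $(e_1,0)\mapsto F$, $(e_1,e_2)\mapsto G$, $(e_1,1)\mapsto E$, $(1,0)\mapsto C$, $(1,e_2)\mapsto A$. This is checked by evaluating the $\binom{7}{2}$ products $(a,b)(c,d)=(ac,bd)$, using that in each $R_i$ a product is non-zero exactly when both factors are non-zero and at least one is a unit, and comparing with the $13$ edges of Figure 1. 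By the Lemma, $\overline{\Gamma(R)}$ is then not a divisor graph, completing the equivalence.

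The routine content is the adjacency bookkeeping; the two places that need care are the choice of orientation in case (b) — a careless orientation of the clique $\{x_a\}\cup\{y_a\}$ (for instance one with $x_a\to y_b$ and $y_b\to p_2$, a pair not joined by an edge) destroys transitivity at $y_b$, which is exactly what forces $p_2,p_3$ to be oriented ``outward'' toward the $y$'s — and the identification of the correct seven vertices in case (c); once these are pinned down, everything reduces to a finite verification.
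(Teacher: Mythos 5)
Your proposal is correct and follows essentially the same route as the paper: sufficiency by exhibiting the divisor-graph structure directly (two complete components when both rings are domains, and an explicit orientation in the mixed case), and necessity by locating seven vertices that induce the Figure~1 configuration and invoking the Lemma. Your preliminary classification ($\lvert R_i\rvert = 4$ when $R_i$ is not a domain) and your use of the transmitter/receiver/transitive orientation criterion in place of the paper's explicit prime-power labelling are only cosmetic differences.
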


\begin{proof}
	
	Let $R_1$ and $R_2$ be integral domains. Then $\Gamma(R)$ is a complete bipartite graph and in this case $\overline{\Gamma(R)}$ has two components in which each of them is complete. So $\overline{\Gamma(R)}$ is a divisor graph. Suppose $R_1$ is an integral domain, then $Z(R_2) = \{0, a\}$, $Reg(R_2) = \{v_1,...,v_n\}$ and $Reg(R_1) = \{u_1,...,u_m\}$, being $R_1$ and $R_2$ are finite. To prove $\overline{\Gamma(R)}$ is a divisor graph, we consider the orientation in $Fig(3)$ as follows:\\
	
	\[\begin{tikzpicture}
	\tikzstyle{vertex}=[circle, draw, inner sep=0pt, minimum size=6pt]
	\tikzset{edge/.style = {->,> = latex'}}
	
	\vertex (A) at (0,2.5) [label=above:${(0,a)}$]{};
	\vertex (B) at (1.5, 1) [label=right:${(0, v_i)}$]{};
	\vertex (C) at (0, -0.5) [label=below:${(u_j, 0)}$]{};
	\vertex (D) at (-1.5, 1) [label=left:${(u_j, a)}$]{};
	
	\draw[edge] (A) to (B);
	\draw[edge] (D) to (B);
	\draw[edge] (D) to (C);
	
	;
	
	\end{tikzpicture}\]
	\hspace{5 cm} \textbf{Figure 3} \\
	
	Define a labelling $f : V(\overline{\Gamma(R))} \longrightarrow \mathbb{N}$ by
	\[ f(x, y) =  \left\{
	\begin{array}{ll}
	2 & (x, y) = (0, a) \\
	2 \times p^i & (x, y) = (0, v_i) \\
	p^j & (x, y) = (u_j, a) \\
	q \times p^j & (x, y) = (u_j, 0) \\
	\end{array},
	\right. \]
	
	where $p$ and $ q$ are distinct primes. Clearly, $f$ is a $1-1$ function and $(x,y)(\alpha , \beta) \neq (0,0)$ if and only if $f(x, y) / f(\alpha , \beta)$ or $f(\alpha, \beta) / f(x , y)$. Hence, $\overline{\Gamma(R)}$ is a divisor graph.\\
	Conversely, let $\overline{\Gamma(R)}$ be a divisor graph. If possible, assume $R_1$ and $R_2$ are not integral domains. Then $Z(R_1) = \{0, b\}$, $Z(R_2) = \{0, a\}$. So, there are distinct vertices $(0, a), (0, v_1), (b, 0), (b, a)$, $(b, v_2), (u_1, 0), (u_2, a)$ are in $\overline{\Gamma(R)}$ and This will give an induced subgraph given in $fig(1)$. Therefore, the graph $\overline{\Gamma(R)}$ is not a divisor graph, which contradict the assumption.
	
\end{proof}

\begin{theorem}
	
	Let $R = R_1 \times R_2$ such that $diam(\Gamma(R_1)) = 0 ~\&~  diam(\Gamma(R_2)) = 1$. Then $\overline{\Gamma(R)}$ is a divisor graph if and only if  $R_1$ is an integral domain.\\
	
\end{theorem}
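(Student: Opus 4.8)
The plan is to prove both directions by a structural analysis of $Z(R)$ where $R = R_1 \times R_2$ with $\operatorname{diam}(\Gamma(R_1)) = 0$ and $\operatorname{diam}(\Gamma(R_2)) = 1$. The hypothesis $\operatorname{diam}(\Gamma(R_1)) = 0$ means $\Gamma(R_1)$ is a single vertex or empty, hence $R_1$ is either an integral domain or has $Z(R_1) = \{0,b\}$ with $b^2 = 0$ (a local ring whose zero-divisor graph is one isolated point). The hypothesis $\operatorname{diam}(\Gamma(R_2)) = 1$ means $\Gamma(R_2)$ is a complete graph on at least two vertices, so $Z(R_2)$ is an ideal with $xy = 0$ for all $x,y \in Z(R_2)$, and $|Z(R_2)| \geq 3$.

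For the forward direction I would argue the contrapositive: assuming $R_1$ is not an integral domain, so $Z(R_1) = \{0,b\}$, I will exhibit seven distinct vertices of $\overline{\Gamma(R)}$ forming the induced subgraph of Figure 1, which by the Lemma forbids $\overline{\Gamma(R)}$ from being a divisor graph. Picking $0 \neq a_1, a_2 \in Z(R_2)$ distinct, a unit $u_1 \in R_1$, and units (or at least regular elements) $v_1, v_2 \in R_2$, I would try the candidate vertices $(b,0)$, $(0,a_1)$, $(b,a_1)$, $(u_1,a_2)$, $(b,v_1)$, $(0,v_2)$, $(u_1,0)$ — essentially mirroring the list used in the proof of Theorem 2.3 but now exploiting that $Z(R_2)$ has enough room. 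The bookkeeping here is to check that the adjacency pattern (recall $(x,y)$ and $(\alpha,\beta)$ are adjacent in $\overline{\Gamma(R)}$ iff $x\alpha \neq 0$ or $y\beta \neq 0$) exactly matches the $13$ edges of Figure 1 and no others; this case analysis, together with verifying the seven vertices are genuinely distinct and nonzero, is the main obstacle and where care is needed, since one must ensure $R_2$ really supplies two distinct nonzero zero-divisors and two suitable regular elements — finiteness of $R_2$ guarantees regular elements exist, and $\operatorname{diam}(\Gamma(R_2)) = 1$ guarantees $|Z(R_2)^*| \geq 2$.

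For the converse, suppose $R_1$ is an integral domain. Then $Z(R_1) = \{0\}$, so a pair $(x_1,x_2)$ is a zero-divisor of $R$ exactly when $x_1 = 0$ or $x_2 \in Z(R_2)$, and two such vertices $(x_1,x_2), (y_1,y_2)$ are adjacent in $\overline{\Gamma(R)}$ iff $x_1 y_1 \neq 0$ or $x_2 y_2 \neq 0$. I would partition the vertex set into the block $\{0\} \times R_2^*$ together with $(R_1 \setminus\{0\}) \times Z(R_2)$, and define an orientation generalizing Figure 3: vertices with first coordinate a unit and second coordinate a zero-divisor act as transmitters toward everything they meet, vertices of the form $(0, v)$ with $v$ regular in $R_2$ act as receivers, and intermediate vertices are checked to be transitive. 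Concretely I expect to build an explicit divisor labelling $f\colon V(\overline{\Gamma(R)}) \to \mathbb{N}$ in the style of the one in Theorem 2.3 — assign to $(u_j, w)$ a product $p^j \cdot (\text{prime power recording } w)$ and to $(0, v_i)$ a value $q \cdot p$-part so that divisibility reproduces exactly the adjacency, using that within $Z(R_2)$ all products vanish while regular elements of $R_2$ multiply nontrivially with everything.

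The step I anticipate being genuinely delicate is the converse when $R_1$ is an integral domain but $R_2$ is a ring with $\operatorname{diam}(\Gamma(R_2)) = 1$ and possibly $Z(R_2)$ not a minimal-sized ideal: I must confirm that no forbidden configuration (such as the Figure 1 subgraph, or whatever other obstruction the authors invoke) arises, and that the proposed orientation really has every vertex a transmitter, receiver, or transitive vertex. If a direct labelling proves awkward, the fallback is to show $\overline{\Gamma(R)}$ decomposes, up to the isolated-ish structure coming from $Z(R_2)$, into pieces each of which is a complete multipartite-type graph already known to be a divisor graph, and then invoke closure of the divisor-graph property under the relevant gluing — but I expect the explicit labelling to go through, so the real work is the careful adjacency verification in both directions.
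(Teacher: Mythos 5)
Your proposal follows essentially the same route as the paper: the ``only if'' direction is handled by exhibiting seven vertices inducing the forbidden configuration of Figure~1 and invoking the Lemma, and the ``if'' direction by an explicit orientation/labelling in the style of Figures~3--4; your candidate vertex list $(b,0), (0,a_1), (b,a_1), (u_1,a_2), (b,v_1), (0,v_2), (u_1,0)$ does check out (it induces exactly the thirteen edges of Figure~1, using $b^2=0$ and $Z(R_2)^2=0$), and the block decomposition you describe for the converse admits a valid orientation. So the proposal is correct and matches the paper's argument, being if anything more explicit than the paper about which vertices realize the forbidden subgraph.
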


\begin{proof}
	
	Let $R_1$ be an integral domain and $diam(\Gamma(R_1)) = 1$. Then $Z(R_2) = \{0, x_1,..., x_k\}$, $Reg(R_1) = \{u_1,..., u_m\}$ and $Reg(R_2) = v_1,..., v_n$. Now, we draw a graph $(figure~4)$ with orientation $D$.
	
	\[\begin{tikzpicture}
	
	\tikzstyle{vertex}=[circle, draw, inner sep=0pt, minimum size=6pt]
	\tikzset{edge/.style = {->,> = latex'}}
	
	\vertex (a) at (1.5, 1) [label=right:${(0, x_i)}$]{};
	\vertex (b) at (0, 2.5) [label=above:${(u_j, x_i)}$]{};
	\vertex (c) at (-1.5, 1) [label=left:${(0, v_k)}$]{};
	\vertex (d) at (0, -0.5) [label=below:${(u_j, 0)}$]{};

	\draw[edge] (c) to (a);
	\draw[edge] (d) to (b);
	\draw[edge] (c) to (b);

	;
	
	\end{tikzpicture}\]
	\hspace{5 cm} \textbf{Figure 4} \\
	
	Now, we define a function $f : V(\overline{\Gamma(R))} \longrightarrow \mathbb{N}$ by
	
	\[ f(x, y) =  \left\{
	\begin{array}{ll}
	p_i & (x, y) = (0, x_i) \\
	3^{N_{j, i}} & (x, y) = (u_j, x_i) \\
	3^{N_{n, l}} \times \prod\limits_{i=1}^{k} p_i	 & (x, y) = (0, v_k) \\
	3^{N_{n, l}} \times \prod S_{j, i} & (x, y) = (u_j, 0) \\
	\end{array},
	\right. \]\\
	
	where $p_1, p_2,..., p_k, S_{1,1},..., S_{j,i}$ are distinct primes and $N_{1, 1},..., N_{n, l}$ are ascending positive integers. Clearly $f$ is $1-1$ and  $(x,y)(a ,b) \neq (0,0)$ if and only if $f(x, y) / f(a , b)$ or $f(a, b) / f(x , y)$. Hence $\overline{\Gamma(R)}$ is a divisor graph.\\
	Conversely, let $\overline{\Gamma(R)}$ be a divisor graph where $R_1$ is not an integral domain. Then $Z(R_1) = \{0, a\}$, $Reg(R_1) = \{u_1,..., u_m\}$, $Z(R_2) = \{0, x_1,..., x_k\}$ and $Reg(R_2) = v_1,..., v_n$.  Again, we have an induced subgraph of $\overline{\Gamma(R)}$ as in $fig(1)$ and by Lemma(2.1), $\overline{\Gamma(R)}$ is not a divisor graph. Which is a contradiction. Hence $R_1$ is an integral domain.	
	
\end{proof}

\begin{theorem}
	Let $R = R_1 \times R_2$ such that $diam(\Gamma(R_1)) = diam(\Gamma(R_2)) = 1$. Then $\overline{\Gamma(R)}$ is not a divisor graph.
	
\end{theorem}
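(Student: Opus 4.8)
The strategy is the one used for the converse parts of Theorems~2.3 and~2.4: I will display the graph $fig(1)$ of Lemma~2.2 as an induced subgraph of $\overline{\Gamma(R)}$, so that by that lemma $\overline{\Gamma(R)}$ cannot be a divisor graph. The only consequence of the hypothesis needed is that $diam(\Gamma(R_i))=1$ says exactly that $\Gamma(R_i)$ is complete on at least two vertices; equivalently, each $R_i$ has at least two distinct nonzero zero divisors, and any two \emph{distinct} nonzero zero divisors of $R_i$ have product $0$. No finer structure of the $R_i$ is required, so the argument is uniform (in particular the case $R_i\cong\mathbb{Z}_2\times\mathbb{Z}_2$ is not special).

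First I would fix, in each factor, distinct elements $a,a'\in Z(R_1)\setminus\{0\}$ (so $aa'=0$) and a unit $u\in R_1$ (e.g.\ $u=1$), and distinct $b,b'\in Z(R_2)\setminus\{0\}$ (so $bb'=0$) and a unit $v\in R_2$. I would also record the adjacency dictionary in $\overline{\Gamma(R)}$: distinct vertices $(x_1,y_1)$ and $(x_2,y_2)$ are adjacent iff $x_1x_2\neq0$ or $y_1y_2\neq0$, and inside each $R_i$ a product of two units, or of a unit with a nonzero element, is nonzero, while a product with a factor $0$, or a product of two distinct nonzero zero divisors, is $0$. Then I would consider the seven elements
\[
A=(u,b),\quad B=(0,b'),\quad C=(u,0),\quad D=(0,v),\quad E=(a,v),\quad F=(a',0),\quad G=(a,b),
\]
check that they are seven distinct nonzero zero divisors of $R=R_1\times R_2$, and verify all $\binom{7}{2}$ adjacencies; the upshot is that the subgraph of $\overline{\Gamma(R)}$ induced on $\{A,\dots,G\}$ has edge set exactly
\[
\{AC,\ AD,\ AE,\ AF,\ AG,\ BD,\ BE,\ CE,\ CF,\ CG,\ DE,\ DG,\ EG\},
\]
that is, it equals $fig(1)$ (indeed with these very labels the two graphs coincide). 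Lemma~2.2 then finishes the proof.

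The adjacency check is routine; the only point to watch is that $fig(1)$ must appear \emph{exactly}, not just as a spanning subgraph of the induced subgraph. The products not pinned down by the dictionary are the squares $a^2$ (which can occur only for the pair $\{E,G\}$, whose first coordinates are both $a$) and $b^2$ (only for $\{A,G\}$, whose second coordinates are both $b$); but $vb\neq0$ already forces $EG$ to be an edge and $ua\neq0$ already forces $AG$ to be an edge, so these squares never matter. Dually, each required non-edge --- $AB,BC,BF,BG,CD,DF,EF,FG$ --- holds because every coordinate product involved either contains a factor $0$ or equals $aa'=0$ or $bb'=0$; this is precisely why distinct representatives $a\neq a'$ and $b\neq b'$ are chosen. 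I do not anticipate any genuine obstacle beyond this bookkeeping.
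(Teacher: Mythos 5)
Your proof is correct and follows the paper's own strategy for this theorem: exhibit the seven-vertex graph of Lemma 2.2 as an induced subgraph of $\overline{\Gamma(R)}$ and invoke that lemma (the paper's Figure 5 is an isomorphic relabelling of Figure 1). The only difference is your choice of witnesses, which is in fact more robust than the paper's: the paper selects three distinct nonzero zero-divisors and two distinct regular elements in each factor, which is more than $diam(\Gamma(R_i))=1$ guarantees (e.g.\ $Z(\mathbb{Z}_9)^{*}$ has only two elements), whereas your seven vertices need only two distinct nonzero zero-divisors and one unit per factor, and you correctly dispose of the only ambiguous products, the squares $a^{2}$ and $b^{2}$.
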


\begin{proof}
	Since $R_{1}$ and $R_{2}$ are finite rings, let $Z(R_1) = \{0, x_1,..., x_n\}$, $Reg(R_1) = \{y_1,..., y_m\}$, $Z(R_2) = \{0, z_1,..., z_k\}$ and $Reg(R_2) = \{w_1,..., w_l\}$. Now, we find distinct vertices $(0, z_1), (0, w_2)$, $(x_1, 0), (x_2, z_2), (x_3, w_1)$, $(y_1, 0)$ and $(y_2, z_3)$ such that $fig(5)$ is an induced subgraph of $\overline{\Gamma(R)}$. Therefore, by Lemma(2.1), $\overline{\Gamma(R)}$ is not a divisor graph.
\end{proof}

\[\begin{tikzpicture}

\vertex (D) at (4.5,-1) [label=below:${(0, z_1)}$]{};
\vertex (C) at (5.7,1.5) [label=right:${(0, w_2)}$]{};
\vertex (B) at (5.4,4) [label=right:${(x_1, 0)}$]{};
\vertex (A) at (3,5) [label=above:${(x_2, z_2)}$]{};
\vertex (G) at (0.1,3.5) [label=above:${(x_3, w_1)}$]{};
\vertex (F) at (-0.7,1) [label=left:${(y_1, 0)}$]{};
\vertex (E) at (0.8,-1) [label=below:${(y_2, z_3)}$]{};
\path
(D) edge (C)
(G) edge (D)
(A) edge (C)
(C) edge (G)
(C) edge (E)
(B) edge (F)
(B) edge (E)
(A) edge (E)
(A) edge (F)
(A) edge (G)
(G) edge (E)
(F) edge (G)
(E) edge (F)
;

\end{tikzpicture}\]
\hspace{5 cm} \textbf{Figure 5} \\

\begin{theorem}
	
	Let $R_{1}$ and $R_{2}$ be local rings and $R = R_1 \times R_2$ such that $diam(\Gamma(R_1)) = 0$ and  $diam(\Gamma(R_2)) = 2$. Then $\overline{\Gamma(R)}$ is a divisor graph if and only if $R_1$ is an integral domain.\\
	
\end{theorem}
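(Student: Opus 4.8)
The plan is to prove the two implications separately: the ``only if'' part is a short application of the forbidden-subgraph Lemma (the $fig(1)$ lemma), while the ``if'' part needs an explicit orientation built on top of Theorem 2.1.

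\emph{Only if.} I argue by contradiction: suppose $\overline{\Gamma(R)}$ is a divisor graph but $R_1$ is not an integral domain. Since $R_1$ is a finite local ring with $diam(\Gamma(R_1))=0$, we have $Z(R_1)=\{0,a_1\}$ with $a_1^2=0$, and $R_1$ has two distinct units $u_1,u_2$. Since $diam(\Gamma(R_2))=2$, the ring $R_2$ has non-zero zero divisors; writing its maximal ideal as $\langle b\rangle$ with $b^t=0\neq b^{t-1}$ (here $t\geq 3$), the element $a_2:=b^{t-1}$ is non-zero and satisfies $a_2^2=0$, and $R_2$ has two distinct units $s_1,s_2$. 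Then, exactly as in the converse part of the theorem treating $diam(\Gamma(R_1))=diam(\Gamma(R_2))=0$, the seven distinct vertices $(u_2,a_2),(0,a_2),(u_1,0),(0,s_1),(a_1,s_2),(a_1,0),(a_1,a_2)$ of $\overline{\Gamma(R)}$ induce the graph $fig(1)$; verifying this uses only the facts $a_1^2=0$, $a_2^2=0$, (zero divisor)$\cdot$(unit)$\neq 0$, and (unit)$\cdot$(unit)$\neq 0$. By Lemma 2.1 this forces $\overline{\Gamma(R)}$ not to be a divisor graph, a contradiction. Hence $R_1$ is an integral domain.

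\emph{If.} Assume $R_1$ is an integral domain, so $Z(R_1)=\{0\}$, $Reg(R_1)=\{u_1,\dots,u_m\}$; write the maximal ideal of $R_2$ as $\langle b\rangle$ with $b^t=0\neq b^{t-1}$, $t\geq 3$, so $Z(R_2)^{*}=\{\,wb^{s}: w\text{ a unit},\ 1\leq s\leq t-1\,\}$, put $\ell(wb^{s})=s$, and $Reg(R_2)=\{v_1,\dots,v_n\}$. The vertices of $\overline{\Gamma(R)}$ split into $X=\{(0,y):y\in Z(R_2)^{*}\}$, $Q=\{(0,v_l)\}$, $P_0=\{(u_j,0)\}$, $P_1=\{(u_j,y):y\in Z(R_2)^{*}\}$, and the adjacencies are: $Q$, $P_1$ and $P:=P_0\cup P_1$ are cliques; $Q$ is joined completely to $X$ and to $P_1$ but to nothing in $P_0$; $P_0$ meets nothing in $X$; and $(0,y)$ is joined to $(0,y')$, respectively to $(u_j,y')$, exactly when $yy'\neq 0$, i.e. when $\ell(y)+\ell(y')\leq t-1$. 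Now orient $\overline{\Gamma(R)}$ thus: on $X$ use the orientation of $\overline{\Gamma(R_2)}$ supplied by Theorem 2.1 (equivalently, order $X$ by increasing $\ell$); make $Q$ a transitive tournament and direct every $Q$--$X$ and $Q$--$P_1$ edge out of $Q$; make $P$ a transitive tournament in which $P_0$ precedes $P_1$ and, inside $P_1$, the vertices are listed by \emph{decreasing} $\ell$; and direct every $X$--$P_1$ edge out of $X$. One checks that under this orientation every vertex is a transmitter, a receiver, or a transitive vertex; the checks for vertices of $Q$ and $P_0$ are immediate, and all remaining transitivity triangles close because of the single observation that in $R_2$, if $\ell(y_1)\leq\ell(y_2)$ and $y_2z\neq 0$ for some $z\in R_2$, then $y_1z\neq 0$. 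By the orientation criterion for divisor graphs recalled in the Introduction, $\overline{\Gamma(R)}$ is a divisor graph. (Alternatively one converts this orientation into an explicit labelling $f:V(\overline{\Gamma(R)})\to\mathbb{N}$, obtained from a divisor-graph labelling of $\overline{\Gamma(R_2)}$ on the vertices $(0,y)$ together with fresh primes encoding the cliques $Q$ and $P$, in the spirit of Figures 3 and 4.)

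The main obstacle is the ``if'' direction, and it is precisely the danger that $fig(1)$ and $fig(2)$ isolate: one must orient the cross-edges so that no vertex ends up with an in-edge and an out-edge it cannot bridge. The tension is that $P_1$ is a clique, hence its edges must respect one linear order, whereas each $(u_j,y)\in P_1$ must ``see'' the set of vertices $(0,y')$ in exactly the same way that $(0,y)$ does; this is what forces the level-order on $P_1$ to be \emph{opposite} to the one on $X$, and the fact that this reversal really does close every transitivity triangle rests on the elementary level-arithmetic of the principal local ring $R_2$ noted above.
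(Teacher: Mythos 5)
Your proof is correct, and your ``if'' direction follows essentially the same route as the paper's: an orientation of $\overline{\Gamma(R)}$ governed by the level $\ell$ in the principal local ring $R_2$, with the vertices $(0,y)$ ordered by increasing level, the vertices $(u_j,y)$ by decreasing level, and every transitivity triangle closing because $\ell(y_1)\le \ell(y_2)$ and $y_2z\neq 0$ force $y_1z\neq 0$ (the paper's ``$r_1+r_3<l$ since $r_1<r_2$''); the only cosmetic difference is how the cliques on $(0,v_l)$ and $(u_j,0)$ are wired in (the paper makes $(0,v_n)$ and $(u_m,0)$ receivers, you make those cliques sources into $X$ and $P_1$), which is immaterial. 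The one substantive addition is that you actually prove the ``only if'' direction by exhibiting seven vertices inducing the forbidden configuration of the $fig(1)$ lemma, a step the paper's proof omits entirely even though the theorem is stated as an equivalence; your choice of vertices does induce that graph, so this fills a real gap in the published argument.
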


\begin{proof}
	Since $R_{1}$ and $R_{2}$ are finite principal ideal rings, let $Z(R_2)$ be generated by $y \in R_2$ with $y^l = 0, y^{l-1} \neq 0$. Then $Z(R_2) = yR_2$. Let $R_1$ be an integral domain and $Reg(R_1) = \{u_1,..., u_m\}$, $Z(R_2) = \{0, v_1y, v_2y,..., v_ny^{l-1}\}$ and $Reg(R_2) = \{v_1,..., v_n\}$. Clearly, if $(v_iy^j) (v_sy^r) \neq 0$, then $j+r <l$. Consider the following orientation $D$: if $j<r$, then we set $(0, v_iy^j) \longrightarrow (0, v_sy^r)$ and $(u_{\alpha}, v_sy^r) \longrightarrow (u_{\alpha}, v_iy^j)$. If $j = r$, then we set $(0, v_sy^r) \longrightarrow (0, v_iy^j)$ and $(u_{\alpha}, v_iy^j) \longrightarrow (u_{\alpha}, v_sy^r)$ if $s < i$. Here, the vertices $(0, v_i)$ and $(u_j, 0)$ for $1\leq i \leq n-1$, $1\leq j \leq m-1$ are all transitive. Here, $(0,v_n)$ and $(u_m, 0)$ are receivers. If $(0, v_1y^{r_1}) \longrightarrow (0, v_2y^{r_2}) \longrightarrow (\alpha, v_3y^{r_3})$ where $\alpha \in \{0, u_1,..., u_n\}$, then $r_1 +r_2 <l, ~r_2 + r_3 < l$ and so $r_1 +r_3 <l$, since $r_1 < r_2$. By this orientation, $(0, v_1y^{r_1}) \longrightarrow (\alpha, v_3y^{r_3})$. If $(\alpha, v_1y^{r_1}) \longrightarrow (0, v_2y^{r_2}) \longrightarrow (0, v_i)$, then $(\alpha, v_1y^{r_1}) \longrightarrow (0, v_i)$, therefore $(0, v_2y^{r_2})$ is transitive. If $(\alpha, v_1y^{r_1}) \longrightarrow (u_i, v_2y^{r_2}) \longrightarrow (u_j, v_3y^{r_3})$, then $r_1 + r_2 < l, ~ r_2+ r_3 <l$ and so $r_1 + r_3 < l$ because $r_2 > r_3$. Hence, $(\alpha, v_1y^{r_1}) \longrightarrow (u_j, v_3y^{r_3}), ~ \alpha \in \{0, u_1,..., u_n\}$ and finally  from $D$, $(u_i, v_1y^{r_1}) \longrightarrow (u_j, v_2y^{r_2}) \longrightarrow (u_m, 0)$, therefore, $(u_i, v_1y^{r_1}) \longrightarrow (u_m, 0)$. Hence $(u_j, v_2y^{r_2})$ is transitive. Thus, $\overline{\Gamma(R)}$ is a divisor graph.
\end{proof}

\begin{theorem}
	
	Let $R_{1}$ and $R_{2}$ be local rings and $R = R_1 \times R_2$ such that $diam(\Gamma(R_1)) = 1 ~or~ 2$ and  $diam(\Gamma(R_2)) = 2$. Then $\overline{\Gamma(R)}$ is not a divisor graph.\\
	
\end{theorem}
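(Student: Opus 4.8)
The plan is to follow the pattern of the earlier non-divisor-graph results: produce seven distinct vertices of $\overline{\Gamma(R)}$ whose induced subgraph is exactly $fig(1)$, and then apply Lemma(2.1).

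First I would fix the ring-theoretic setup. Since $R_1$ and $R_2$ are finite local principal ideal rings (being direct factors of the finite principal ideal ring $R$), we may write $Z(R_1)=xR_1$ with $x^{k}=0\ne x^{k-1}$ and $Z(R_2)=yR_2$ with $y^{l}=0\ne y^{l-1}$ for some $k,l\in\mathbb{N}$. Because $diam(\Gamma(R_1))=1$ or $2$, the graph $\Gamma(R_1)$ is nonempty, so $R_1$ is not a field and $k\ge 2$; because $diam(\Gamma(R_2))=2$, while $l\le 2$ would make $\Gamma(R_2)$ a complete graph (hence of diameter at most $1$), we must have $l\ge 3$. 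The only facts the argument needs are: $x^{k}=0$ and $y^{l}=0$; the elements $x,\ x^{k-1},\ y,\ y^{l-1}$ are all nonzero; consequently $x^{k-1}w=0$ for every $w\in Z(R_1)$ and $y^{l-1}z=0$ for every $z\in Z(R_2)$; and in a finite local ring a unit times a nonzero element is nonzero.

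Next I would consider the seven vertices $A=(x,1)$, $B=(x^{k-1},0)$, $C=(0,1)$, $D=(1,0)$, $E=(1,y)$, $F=(0,y^{l-1})$, $G=(x,y)$ of $\overline{\Gamma(R)}$. Each is a nonzero zero-divisor of $R$ (some coordinate is $0$, or lies in $Z(R_1)$, or lies in $Z(R_2)$), and they are pairwise distinct: the first coordinates sort them into the classes $\{A,B,G\}$ (first coordinate a nonzero non-unit of $R_1$), $\{C,F\}$ (first coordinate $0$) and $\{D,E\}$ (first coordinate the unit $1$), and inside each class the second coordinates differ. It then remains to compute the $21$ products $(a,b)(c,d)=(ac,bd)$ and to check that the induced subgraph is precisely $fig(1)$ under this labelling: the $13$ edges $AC$, $AD$, $AE$, $AF$, $AG$, $BD$, $BE$, $CE$, $CF$, $CG$, $DE$, $DG$, $EG$ must give nonzero products (in every case some coordinate is a unit times a nonzero element), while the $8$ non-edges must give $(0,0)$; here $AB$ and $BG$ both give $(x^{k},0)=(0,0)$, $EF$ and $FG$ both give $(0,y^{l})=(0,0)$, and $BC$, $BF$, $CD$, $DF$ vanish simply because one coordinate is $0$. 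Once $fig(1)$ is realized as an induced subgraph of $\overline{\Gamma(R)}$, Lemma(2.1) gives that $\overline{\Gamma(R)}$ is not a divisor graph.

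The one genuine subtlety, and the place where the hypotheses $diam(\Gamma(R_2))=2$ and $diam(\Gamma(R_1))=1$ or $2$ are actually used, is the choice of the nilpotent ``top'' elements $x^{k-1}$ and $y^{l-1}$: in the earlier theorems every pair of zero-divisors of $R_2$ multiplied to $0$, whereas now $Z(R_2)$ contains zero-divisors with nonzero product, so to keep the induced subgraph equal to $fig(1)$ (rather than a graph with extra edges) one must use exactly those zero-divisors whose product with every element of the relevant zero-divisor set is $0$. Once the right seven vertices are written down, the remaining verification is the routine check of the $21$ adjacencies, and it goes through uniformly whether $k=2$ (in which case $x^{k-1}=x$) or $k\ge 3$. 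I expect this bookkeeping, rather than any conceptual difficulty, to be the bulk of the work.
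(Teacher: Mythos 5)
Your proposal is correct and follows the same route as the paper, whose entire proof is the one-line assertion that Lemma~2.1 applies; your seven vertices $(x,1)$, $(x^{k-1},0)$, $(0,1)$, $(1,0)$, $(1,y)$, $(0,y^{l-1})$, $(x,y)$ genuinely realize Figure~1 as an induced subgraph (the $13$ required products are nonzero because some coordinate is a unit times a nonzero element, and the $8$ forbidden ones vanish via $x^{k}=0$, $y^{l}=0$, or a zero coordinate), which is exactly the construction the paper omits. Note that your argument only uses $k,l\ge 2$ (neither factor a domain) rather than the stated diameters, so it in fact also covers the negative directions of the preceding theorems.
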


\begin{proof}
	It can easily be proved by Lemma (2.1) that the graph is not a divisor graph.
\end{proof}

\begin{proposition}
	Complement of a complete bipartite graph is a divisor graph.
\end{proposition}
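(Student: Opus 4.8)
The plan is to describe $\overline{K_{m,n}}$ explicitly and then give an integer labelling in the spirit of the constructions used above. First I would fix a complete bipartite graph $G = K_{m,n}$ with bipartition $V(G) = A \cup B$, where $|A| = m$ and $|B| = n$, and record that in $\overline{G}$ two distinct vertices are adjacent precisely when they lie in the same part of the bipartition. Hence $\overline{G}$ is the disjoint union $K_m \sqcup K_n$ of a clique on $A$ and a clique on $B$, with no edges between them.

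Next I would realize this graph by a set of positive integers. Choosing two distinct primes $p$ and $q$ and enumerations $A = \{a_1,\dots,a_m\}$, $B = \{b_1,\dots,b_n\}$, I would define $f : V(\overline{G}) \to \mathbb{N}$ by $f(a_i) = p^{\,i}$ and $f(b_j) = q^{\,j}$. This $f$ is injective, and for distinct vertices $u,v$ one has $f(u) \mid f(v)$ or $f(v) \mid f(u)$ if and only if $u$ and $v$ belong to the same part: inside $A$ we have $p^{i} \mid p^{i'}$ whenever $i \le i'$ (and symmetrically inside $B$), whereas a positive power of $p$ never divides a positive power of $q$, nor conversely. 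Therefore $\overline{G} \cong G\bigl(f(V(\overline{G}))\bigr)$, so $\overline{G}$ is a divisor graph.

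As an alternative argument in the language of \cite{div}, I would orient each of the two cliques by a linear order on its vertices (a transitive tournament); then in each clique the minimum is a transmitter, the maximum is a receiver, and every remaining vertex is transitive, and because the two cliques share no edge this is a legitimate orientation of $\overline{G}$ witnessing that it is a divisor graph.

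I do not expect a genuine obstacle here; the only points needing care are the degenerate cases $m = 1$ or $n = 1$, where the corresponding ``clique'' is a single vertex realized by a single prime, and the requirement that $p \neq q$, which is exactly what prevents spurious divisibility relations between the two parts.
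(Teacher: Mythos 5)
Your proof is correct and rests on the same key observation as the paper's: the complement of $K_{m,n}$ is the disjoint union of two cliques $K_m \sqcup K_n$. The only difference is that the paper then concludes by citing Proposition 2.5 of \cite{div}, whereas you make the argument self-contained with an explicit prime-power labelling (and an orientation by transitive tournaments), which is a harmless and arguably more transparent way to finish.
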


\begin{proof}
	We know by Corollary (2.10) of \cite{div}, complete bipartite graph is a divisor graph. Also, complement of the complete bipartite graph has two components such that each component is a complete graph. Hence, by Proposition $(2.5)$ of \cite{div}, it is a divisor graph.
\end{proof}

\begin{theorem}
	Let $R$ be a commutative ring. If $\lvert Ass(R)\rvert = 2$ and $p_1 \bigcap p_2 = \{0\}$, then $\Gamma(R)$ is a divisor graph.
 \end{theorem}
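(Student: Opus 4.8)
The plan is to show that the two hypotheses force $\Gamma(R)$ to be nothing more than a complete bipartite graph, and then to quote the fact (Corollary 2.10 of \cite{div}, already used in the proof of Proposition~2.8 above) that complete bipartite graphs are divisor graphs.

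First I would pin down the vertex set. Since a finite ring is Noetherian (indeed Artinian), the set of zero divisors is the union of the associated primes, so $Z(R) = p_1 \cup p_2$. Neither $p_i$ can be the zero ideal: if, say, $p_1 = \{0\}$ then $\{0\}$ is prime, $R$ is an integral domain, and $\lvert Ass(R)\rvert = 1$, contrary to hypothesis. Combining $Z(R) = p_1 \cup p_2$ with $p_1 \cap p_2 = \{0\}$, the vertex set $Z(R)\setminus\{0\}$ of $\Gamma(R)$ is the disjoint union of the two nonempty sets $X = p_1 \setminus \{0\}$ and $Y = p_2 \setminus \{0\}$.

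Next I would determine all the edges. If $a \in X$ and $b \in Y$, then $ab$ lies in the ideal $p_1$ (because $a \in p_1$) and in the ideal $p_2$ (because $b \in p_2$), so $ab \in p_1 \cap p_2 = \{0\}$; since also $a \neq b$ (as $X \cap Y = \emptyset$), the vertices $a$ and $b$ are adjacent in $\Gamma(R)$. Thus the bipartite part between $X$ and $Y$ is complete. Conversely, suppose $a, b \in X$ are adjacent, i.e.\ $ab = 0$; then $ab \in p_2$, and since $p_2$ is prime we get $a \in p_2$ or $b \in p_2$, whence that element lies in $p_1 \cap p_2 = \{0\}$, contradicting that it is a vertex. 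Hence $X$ induces no edges, and symmetrically neither does $Y$. Therefore $\Gamma(R)$ is exactly the complete bipartite graph with parts $X$ and $Y$, and by Corollary~2.10 of \cite{div} it is a divisor graph. Concretely, orienting every edge from $X$ toward $Y$ makes each vertex of $X$ a transmitter and each vertex of $Y$ a receiver, which is the orientation criterion for divisor graphs recalled in the introduction.

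I do not expect a genuine obstacle here; the only points requiring care are the standard fact that $Z(R)$ is the union of the associated primes of a Noetherian ring and the observation that $p_1 \cap p_2 = \{0\}$ separates the vertices into two parts with no internal edges, after which the argument is a two-line use of primality. As a consistency check, in the finite principal ideal setting one has $R = R_1 \times R_2$ with $R_1,R_2$ local of maximal ideals $M_1,M_2$, so $p_1 = M_1 \times R_2$, $p_2 = R_1 \times M_2$, and $p_1 \cap p_2 = M_1 \times M_2 = \{0\}$ forces both $R_1$ and $R_2$ to be fields; the computation above then simply recovers $\Gamma(R) \cong K_{\lvert R_1\rvert - 1,\ \lvert R_2\rvert - 1}$.
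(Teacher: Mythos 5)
Your proposal is correct and follows essentially the same route as the paper: both establish $Z(R)=p_1\cup p_2$, split the vertices into $p_1\setminus\{0\}$ and $p_2\setminus\{0\}$, use primality to rule out internal edges and $p_1\cap p_2=\{0\}$ to force all cross edges, and conclude that $\Gamma(R)$ is complete bipartite and hence a divisor graph. The only cosmetic difference is that you invoke the standard Noetherian fact for $Z(R)=p_1\cup p_2$ while the paper derives it directly from $p_1\cap p_2=\{0\}$; both are fine.
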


\begin{proof}
	Let $p_1, p_2 \in Ass(R)$ such that $p_1 \bigcap p_2 = \{0\}$. Since $p_1 \bigcup p_2 \subseteq Z(R)$, let $x \in Z(R)\backslash p_1 \bigcup p_2$. Then there exists $0 \neq y \in R$ such that $xy = 0 \in p_1 \bigcap p_2$. So $y \in p_1 \bigcap p_2 = \{0\}$, which is a contradiction. So $Z(R) = p_1 \bigcup p_2 $.\\
	Now, we have two sets $V_1$ and $V_2$ such that $V_1 = p_1 \backslash \{0\}, ~ V_2 = p_2 \backslash \{0\}$. In order to prove $\Gamma(R)$ is a divisor graph, we prove no two elements of $V_1$ are adjacent and same for $V_2$. If $0 \neq a, b \in V_1$ such that $ab = 0$, then $ab \in p_2$. Since $p_2$ is a prime ideal so either $a \in V_2$ or $b \in V_2$, which is a contradiction. Therefore, $\Gamma(R)$ is a bipartite graph. Now, we take $a \in V_1$ and $b \in V_2$ for all $a, b$. If $ab \in p_1$ since $p_1$ is an ideal and $ab \in p_2$ since $p_2$ is an ideal. Then $ab \in p_1 \bigcap p_2 = \{0\}$ so $ab =0$ and hence $\Gamma(R)$ is a complete bipartite graph. Thus, by $Thm(1.1)$, $\Gamma(R)$ is a divisor graph.
\end{proof}

\begin{theorem}
	Let $R$ be a commutative ring. If $\lvert Ass(R)\rvert = 2$ and $p_1 \bigcap p_2 = \{0\}$, then $\overline{\Gamma(R)}$ is a divisor graph.
\end{theorem}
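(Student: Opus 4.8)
The plan is to observe that, under these hypotheses, $\overline{\Gamma(R)}$ is exactly the complement of a complete bipartite graph, and then to quote the proposition that the complement of a complete bipartite graph is a divisor graph.

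First I would recall the structural information produced in the proof of the preceding theorem. Writing $p_1,p_2$ for the two elements of $Ass(R)$, the condition $p_1\cap p_2=\{0\}$ together with primeness of $p_1,p_2$ forces $Z(R)=p_1\cup p_2$, and, setting $V_1=p_1\setminus\{0\}$ and $V_2=p_2\setminus\{0\}$, the graph $\Gamma(R)$ is the complete bipartite graph with parts $V_1$ and $V_2$. This bipartition is genuine: $V_1\cap V_2=(p_1\cap p_2)\setminus\{0\}=\emptyset$, and $V(\overline{\Gamma(R)})=V(\Gamma(R))=V_1\sqcup V_2$. (Neither $V_i$ is empty, since a zero associated prime would make $R$ a domain and hence $\lvert Ass(R)\rvert=1$; but this is not needed below.)

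Next I would pass to the complement. If $a,b$ are distinct vertices lying in the same part $V_i$, then $ab\neq 0$ — two vertices of one part are non-adjacent in the complete bipartite graph $\Gamma(R)$ — so $a$ and $b$ are adjacent in $\overline{\Gamma(R)}$; and if $a\in V_1$, $b\in V_2$, then $ab=0$, so $a$ and $b$ are non-adjacent in $\overline{\Gamma(R)}$. Hence $\overline{\Gamma(R)}$ is the disjoint union of the complete graph on $V_1$ with the complete graph on $V_2$, i.e.\ it is precisely the complement of the complete bipartite graph $\Gamma(R)$. Applying the proposition that the complement of a complete bipartite graph is a divisor graph then shows that $\overline{\Gamma(R)}$ is a divisor graph, completing the proof.

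I do not anticipate a real obstacle here. The only step meriting care is the re-derivation of $Z(R)=p_1\cup p_2$ and of the disjointness of $V_1$ and $V_2$, both of which are immediate from primeness of $p_1,p_2$ and from $p_1\cap p_2=\{0\}$, and are already contained in the proof of the previous theorem. Once the complete-bipartite structure of $\Gamma(R)$ is in hand, the statement is a one-line consequence of the proposition just cited.
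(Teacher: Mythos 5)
Your proposal is correct and is exactly the argument the paper intends: the paper states this theorem without a written proof, but it is placed immediately after the result that $\Gamma(R)$ is a complete bipartite graph under these hypotheses and after the proposition that the complement of a complete bipartite graph (a disjoint union of two complete graphs) is a divisor graph, and combining those two is precisely what you do. Your filling-in of the details (disjointness of $V_1$ and $V_2$ from $p_1\cap p_2=\{0\}$, adjacency within parts and non-adjacency across parts in the complement) is accurate and supplies what the paper leaves implicit.
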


Few results for the polynomial ring over the ring $R$, we found interesting, specially, in \cite{mc},  if $f(x) \in Z(R[x])$, then there exists a constant $c \in R \backslash \{0\}$ such that $cf(x) = 0$. Also in \cite{osba}, if $R$ is a finite commutative principal ideal ring with unity and $T = R[x]$ or $R[[x]]$, then for each $f \in T$, there exists $c_f \in R, ~ f_1 \in T\backslash Z(T)$ such that $f = c_ff_1$.

\begin{corollary}\cite{osba}
	Let $R$ be a finite commutative principal ideal ring with unity and $T = R[x]$ or $R[[x]]$. Then for each $f, g \in Z(T), ~ fg =0$ if and only if $c_fc_g = 0$.\\
\end{corollary}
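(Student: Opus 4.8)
The plan is to exploit the factorization recalled immediately before the statement: by the result of \cite{osba}, each element $h \in T$ can be written as $h = c_h h_1$ with $c_h \in R$ and $h_1 \in T \setminus Z(T)$ a non-zero-divisor. Applying this to both $f$ and $g$, I would write $f = c_f f_1$ and $g = c_g g_1$ with $f_1, g_1 \in T \setminus Z(T)$. Since $R$ is commutative and the coefficients $c_f, c_g$ are constants, they commute with every element of $T$, so the product factors as $fg = (c_f f_1)(c_g g_1) = (c_f c_g)(f_1 g_1)$. This single identity is the heart of the argument, and the whole problem reduces to analyzing when $(c_f c_g)(f_1 g_1)$ vanishes.

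The key auxiliary observation I would record is that $f_1 g_1$ is again a non-zero-divisor of $T$. Indeed, if $(f_1 g_1)k = 0$ for some $k \in T$, then $f_1(g_1 k) = 0$ forces $g_1 k = 0$ (as $f_1$ is regular), and this in turn forces $k = 0$ (as $g_1$ is regular); hence the product of two regular elements is regular. Consequently, a constant $c \in R$ can annihilate $f_1 g_1$ only when $c$ itself is zero: from $c\,(f_1 g_1) = 0$ with $f_1 g_1$ regular we get $c = 0$.

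With these two facts in hand both implications are immediate. For the ``if'' direction, if $c_f c_g = 0$ then $fg = (c_f c_g)(f_1 g_1) = 0$ at once. For the ``only if'' direction, if $fg = 0$ then $(c_f c_g)(f_1 g_1) = 0$; since $f_1 g_1$ is a non-zero-divisor, the previous paragraph yields $c_f c_g = 0$. This establishes that $fg = 0$ if and only if $c_f c_g = 0$.

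I do not anticipate a genuine obstacle here, since the statement is essentially a formal consequence of the factorization $h = c_h h_1$ together with the centrality of constants; the only point requiring care is verifying that $f_1 g_1$ remains a non-zero-divisor, which is precisely what legitimizes cancelling it. The finiteness and principal-ideal hypotheses on $R$ are not invoked directly in this deduction --- they are needed only to guarantee the existence of the factorization from \cite{osba}, which I take as given from the discussion preceding the corollary.
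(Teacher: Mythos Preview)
Your argument is correct. The paper does not supply its own proof of this corollary: it is quoted verbatim from \cite{osba} and stated without proof, so there is nothing in the paper to compare against. Your derivation from the factorization $h = c_h h_1$ (which the paper recalls immediately before the corollary) is exactly the intended one-line deduction: $fg = (c_f c_g)(f_1 g_1)$ with $f_1 g_1$ regular, hence $fg = 0 \Leftrightarrow c_f c_g = 0$. The check that a product of regular elements is regular is the only nontrivial step, and you handle it cleanly.
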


\begin{theorem}
	Let $R$ be a finite commutative principal ideal ring with unity and $T = R[x]$. Then $\overline{\Gamma(T)}$ is a divisor graph if $\overline{\Gamma(R)}$ is a divisor graph.
\end{theorem}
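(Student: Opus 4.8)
The plan is to realise $\overline{\Gamma(T)}$ as a ``blow-up'' of $\overline{\Gamma(R)}$ and to transport an orienting witness of the divisor-graph property from the base graph to the blow-up. By the result of \cite{osba} recalled just above, each $f\in Z(T)$ admits a representation $f=c_f f_1$ with $c_f\in R$ and $f_1\in T\setminus Z(T)$; fix one for every $f$, taking $c_c=c$ when $f=c$ is a nonzero constant (legitimate, since $1\notin Z(T)$). If $f\neq0$ then $c_f\neq0$, and choosing any $g\neq0$ with $fg=0$ the Corollary gives $c_fc_g=0$ with $c_g\neq0$, so $c_f\in Z(R)\setminus\{0\}$. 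Hence $f\mapsto c_f$ maps $V(\overline{\Gamma(T)})$ onto $V(\overline{\Gamma(R)})$ (onto, because $c_c=c$); write $F_c=\{f:c_f=c\}$ for the fibre over a vertex $c$. Using the Corollary again, distinct vertices $f,g$ of $\overline{\Gamma(T)}$ are adjacent iff $c_fc_g\neq0$, whence (i) $F_c$ is a clique when $c^2\neq0$ and an independent set when $c^2=0$; and (ii) for $c\neq c'$ the bipartite subgraph induced on $F_c\cup F_{c'}$ is complete when $c,c'$ are adjacent in $\overline{\Gamma(R)}$ and empty otherwise.

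Next I would invoke the criterion of \cite{div}: since $\overline{\Gamma(R)}$ is a divisor graph, fix an orientation $D_R$ of it in which every vertex of positive degree is a transmitter, a receiver, or a transitive vertex. Define an orientation $D_T$ of $\overline{\Gamma(T)}$ by (a) choosing, on each fibre $F_c$ that is a clique, an arbitrary linear order and orienting its edges from the smaller to the larger endpoint, and (b) orienting an edge joining $f\in F_c$ to $g\in F_{c'}$ with $c\neq c'$ as $f\to g$ exactly when $c\to c'$ in $D_R$. Since $D_R$ is an orientation, so is $D_T$.

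The heart of the argument is to check that every vertex $f$ of positive degree in $D_T$ is a transmitter, a receiver, or a transitive vertex, which I would do by cases on the status of $c:=c_f$ in $D_R$. If $c$ is a transmitter (resp.\ receiver) in $D_R$, then all inter-fibre edges at $f$ leave (resp.\ enter) $f$; so when $c^2=0$ every element of $F_c$ is a transmitter (resp.\ receiver), and when $c^2\neq0$ the minimum (resp.\ maximum) element of the clique $F_c$ is a transmitter (resp.\ receiver) while every other element of $F_c$ is transitive. If $c$ is transitive in $D_R$, then $f$ has positive in- and out-degree, and for $a\to f\to b$ in $D_T$ one distinguishes whether $a,b$ lie in $F_c$: two internal endpoints are comparable through the chosen linear order; a mixed pair uses that $F_c$ is a clique together with $c_a\to c$ or $c\to c_b$; and two external endpoints use transitivity of $c$, noting that $c_a\neq c_b$ (otherwise $D_R$ would orient $\{c_a,c\}$ in both directions), so that $c_a\to c_b$ is an edge of $D_R$ — which at once guarantees both that $ab\neq0$ and that $a\to b$. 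The feature that keeps everything consistent is that the directions of the inter-fibre edges at $f$ are dictated by $c$ alone, insensitive to the position of $f$ within its fibre, so they never conflict with the intra-fibre linear orders. Finally, if $c$ is isolated in $\overline{\Gamma(R)}$, then $F_c$ is a set of isolated vertices of $\overline{\Gamma(T)}$ (when $c^2=0$) or a complete-graph connected component (when $c^2\neq0$), whose transitive orientation is already of the required form. Thus $D_T$ exhibits $\overline{\Gamma(T)}$ as a divisor graph. (When $R$ is an integral domain, $T$ is also a domain, $\overline{\Gamma(T)}$ is edgeless, and the statement is vacuous.)

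I expect the only genuinely delicate point to be the transitive-vertex case, namely verifying that the linear orders chosen on the clique fibres are compatible with the inter-fibre edges inherited from $D_R$ and that each implied arc $a\to b$ is in fact an edge of $\overline{\Gamma(T)}$; the remaining cases, together with the reductions in the first paragraph, are routine bookkeeping with the Corollary.
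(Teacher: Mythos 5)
Your proposal is correct and follows essentially the same route as the paper: both transport the orienting witness $D$ of $\overline{\Gamma(R)}$ to $\overline{\Gamma(T)}$ along the map $f\mapsto c_f$, using the Corollary to translate adjacency and then checking the transmitter/receiver/transitive cases according to the status of $c_f$. If anything, your write-up is the more complete of the two, since the paper's proof never explains how to orient an edge between distinct $f,g$ with $c_f=c_g$ (such edges exist whenever $c_f^2\neq 0$ and the fibre has more than one element, and the rule ``$c_f\to c_g$ in $D$ implies $f\to g$ in $K$'' is silent there); your linear orders on the clique fibres, and the additional transitivity cases they force, fill exactly that gap.
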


\begin{proof}
	Let $\overline{\Gamma(R)}$ be a divisor graph with orientation $D$. If $f(x), g(x) \in Z(T)$ with $f(x)g(x) \neq 0$, then there is an edge between them in $\overline{\Gamma(T)}$. Now from corollary $[2.1]$, $f(x)g(x) = 0$ if and only if $c_fc_g = 0$ so $f(x)g(x) \neq 0$ this implies $c_fc_g \neq 0$. We define an orientation $K$ in $\overline{\Gamma(T)}$. If $c_f \longrightarrow c_g$ in $D$, then $f(x) \longrightarrow g(x)$ in $K$. Let $f(x) = c_ff_1$ be a vertex in $\overline{\Gamma(T)}$. If $c_f$ is a receiver(transmitter) in $D$, then $f(x)$ is clearly a receiver(transmitter) in $K$. Assume $c_f$ is transitive in $D$ with $h(x) \longrightarrow f(x)$ and $f(x) \longrightarrow g(x)$ in $K$. Then $h(x)f(x) \neq 0$ and $f(x)g(x) \neq 0$ and these imply  $c_hc_f \neq 0$ and $c_fc_g \neq 0$. Therefore, $c_h \longrightarrow c_f$, $c_f \longrightarrow c_g$ in $D$. Since $c_f$ is transitive in $D$, $c_h \longrightarrow c_g$ in $D$, therefore, $c_hc_g \neq 0$. Hence $h(x) \longrightarrow g(x)$ in $K$ and $h(x)g(x) \neq 0$, i.e. $f(x)$ is transitive in $K$. Thus, $\overline{\Gamma(T)}$ is a divisor graph.
\end{proof}

\section{ Conclusion }

In this manuscript, we prove that if $R$ is a finite commutative principal ideal ring with unity, then $\overline{\Gamma(T)}$ is a divisor graph if and only if $R$ is a local ring and $R$ is a product of two local rings such that one of them is an integral domain.

\section*{Acknowledgement}
The authors are thankful to Council of Scientific and Industrial Research (CSIR), Govt. of India for financial support under Ref. No. 22/06/2014(i)EU-V, Sr. No. 1061440753 dated 29/12/2014 and Indian Institute of Technology Patna for providing the research facilities.

\bibliographystyle{amsplain}

\end{document}